\newcommand{\Q}{{\mathbb Q}}
\newcommand{\F}{{\mathbb F}}
\newcommand{\N}{{\mathrm{N}}}
\newcommand{\ka}{{\mathfrak a}}
\renewcommand{\ker}{\mathrm{ker}}
\newcommand{\im}{\mathrm{im}}
\renewcommand{\N}{\mathrm{N}}
\newcommand{\kp}{\mathfrak{p}}
\renewcommand{\ka}{\mathfrak{a}}
\newcommand{\km}{\mathfrak{m}}
\newcommand{\re}{\mathrm{Re}}
\renewcommand{\im}{\mathrm{Im}}
\renewcommand{\epsilon}{\varepsilon}
\newtheorem{thm}{Theorem}[section]
\newtheorem{theorem}[thm]{Theorem}
\newtheorem*{theorem*}{Theorem}
\newtheorem{lemma}[thm]{Lemma}
\author{Apoorva Panidapu}
\address{Department of Mathematics, San Jose State University, San Jose, CA 95112, USA}
\email{apoorva002@gmail.com}
\author{Jesse Thorner}
\address{Department of Mathematics, University of Illinois, Urbana, IL 61801, USA}
\email{\href{mailto:jesse.thorner@gmail.com}{jesse.thorner@gmail.com}}
\title{Short-interval sector problems for CM elliptic curves}
\begin{document}

\maketitle

\begin{abstract}
Let $E/\Q$ be an elliptic curve that has complex multiplication (CM) by an imaginary quadratic field $K$.  For a prime $p$, there exists $\theta_p \in [0, \pi]$ such that $p+1-\#E(\F_p) = 2\sqrt{p} \cos \theta_p$.  Let $x>0$ be large, and let $I\subseteq[0,\pi]$ be a subinterval.  We prove that if $\delta>0$ and $\theta>0$ are fixed numbers such that $\delta+\theta<\frac{5}{24}$, $x^{1-\delta}\leq h\leq x$, and $|I|\geq x^{-\theta}$, then 
\[
\frac{1}{h}\sum_{\substack{x < p \le x+h \\ \theta_p \in I}}\log{p}\sim \frac{1}{2}\mathbf{1}_{\frac{\pi}{2}\in I}+\frac{|I|}{2\pi},
\]
where $\mathbf{1}_{\frac{\pi}{2}\in I}$ equals 1 if $\frac{\pi}{2}\in I$ and $0$ otherwise.  We also discuss an extension of this result to the distribution of the Fourier coefficients of holomorphic cuspidal CM  newforms.
\end{abstract}

\section{Introduction and Statement of Result}\label{introduction}
 
Let $E:y^2 = x^3+ax+b$ be an elliptic curve over $\Q$ of conductor $N_E$ that has complex multiplication (CM) by an imaginary quadratic field $K$.  Such a field $K$ necessarily has class number 1, so all fractional ideals are principally generated and the ring of integers $\mathcal{O}_K$ of $K$ is a unique factorization domain.  For a prime $p$, let $E(\mathbb{F}_p)$ be the group of $\mathbb{F}_p$-rational points on the reduction of $E$ modulo $p$.  Hasse proved that there exists $\theta_p\in[0,\pi]$ such that
\begin{equation}
\label{eqn:Hasse}
p+1-\#E(\mathbb{F}_p) = 2\sqrt{p}\cos\theta_p.
\end{equation}
A classical result of Deuring states that for primes $p\nmid N_E$, we have that $\theta_p = \frac{\pi}{2}$ if and only if $p$ is inert in $K$.  Such primes have density $\frac{1}{2}$.  For the density $\frac{1}{2}$ subset of the primes $p\nmid D_K$ that split in $K$, the angles $\theta_p$ are equidistributed in $[0,\pi]$.  This follows from work of Hecke; see \cite[Section 7]{hecke} and \cite[Section 2]{Sutherland}.  The following theorem summarizes these results.

\begin{theorem*}
Let $E/\Q$ be a CM elliptic curve.  For a prime $p$, define $\theta_p$ by \eqref{eqn:Hasse}.  For a fixed subinterval $I \subseteq [0, \pi]$, let $\mu(I)=\frac{|I|}{2\pi}$ if $\frac{\pi}{2}\notin I$ and $\mu(I)=\frac{1}{2}+\frac{|I|}{2\pi}$ if $\frac{\pi}{2}\in I$.  We have that
\begin{equation}
\label{eqn:ST_CM}
\lim_{x \to \infty} \frac{1}{x}\sum_{\substack{x<p \le 2x \\ \theta_p \in I}}\log p = \mu(I).
\end{equation}
\end{theorem*}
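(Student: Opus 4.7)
The plan is to decompose the sum according to the splitting of $p$ in $\OO_K$ and handle the inert and split contributions separately using the Deuring--Hecke theory cited above; the finitely many primes $p \mid N_E$ contribute $O(\log N_E)$ and are negligible. By Deuring's theorem, every $p \nmid N_E$ inert in $K$ satisfies $\theta_p = \pi/2$, so the inert part equals $\ind_{\pi/2 \in I}\sum_{x<p\leq 2x,\ p\text{ inert}}\log p$. Inertness in the imaginary quadratic field $K$ is a congruence condition modulo $|D_K|$, so the prime number theorem in arithmetic progressions yields $\sum_{x<p\leq 2x,\ p \text{ inert}}\log p \sim x/2$, contributing $\frac{1}{2}\ind_{\pi/2\in I}$ to the limit.

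For the split primes I would invoke the Hecke Grossencharacter $\psi_E$ on $K$ of infinity type $(1,0)$ attached to $E$: for each split $p = \pi\bar\pi$ with $\pi \in \OO_K$, one has $a_p = \psi_E(\pi)+\overline{\psi_E(\pi)}$ and $|\psi_E(\pi)|^2 = \N\pi = p$, so $\psi_E(\pi) = \sqrt{p}\,e^{i\alpha_\pi}$ for some $\alpha_\pi \in [-\pi,\pi]$ and $\theta_p = |\alpha_\pi|$. The desired density $|I|/(2\pi)$ for split primes is equivalent to uniform equidistribution of $\alpha_\kp$ on $\R/2\pi\Z$ as $\kp$ ranges over degree-one prime ideals of $\OO_K$ (each rational prime being counted by its two conjugate ideals, and $\{|\alpha|\in I\}$ having measure $2|I|$ on $(-\pi,\pi]$). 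By Weyl's criterion, after approximating $\ind_I$ by trigonometric polynomials in $\alpha_\kp$, this reduces to proving
\[
\sum_{\substack{\kp \text{ prime of }\OO_K \\ \N\kp \in (x,2x]}} \psi_E^n(\kp)\log \N\kp = o(x) \qquad (n \in \Z,\ n \neq 0).
\]

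The analytic heart of the argument, and its main obstacle, is establishing this cancellation. For each nonzero $n$, the character $\psi_E^n$ has infinity type $(n,0)$ and is therefore a nontrivial unitary Hecke Grossencharacter of $K$; Hecke's original construction then shows that $L(s,\psi_E^n)$ extends to an entire function of order one satisfying a functional equation, and it is nonvanishing on the line $\re(s)=1$. A standard Perron-formula contour shift using the classical de la Vall\'ee Poussin zero-free region for Hecke $L$-functions then produces the required $o(x)$ estimate. Combining with the inert contribution yields $\mu(I) = \frac{1}{2}\ind_{\pi/2 \in I} + |I|/(2\pi)$, completing the proof.
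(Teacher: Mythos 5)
Your proposal is correct and follows essentially the same route as the paper: the same decomposition into inert primes (handled by $\theta_p=\pi/2$ and the prime number theorem for the quadratic character $\chi_K$) and split primes (handled via Deuring's Gr\"ossencharacter, Weyl's criterion, and Hecke's analytic continuation and nonvanishing of $L(s,\xi^n)$ on $\re(s)=1$), which is precisely the classical Deuring--Hecke argument the paper cites and then refines quantitatively in Sections 2--3 using Beurling--Selberg polynomials and Coleman's zero estimates. One small point: $\psi_E^n$ of infinity type $(n,0)$ is not itself unitary, so the Weyl sums should be written with the unitarized character $\xi^n(\kp)=\psi_E^n(\kp)/\N\kp^{n/2}$ (equivalently $e^{in\alpha_\kp}$), for which the claimed $o(x)$ bound is the correct statement.
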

In this paper, we prove a result that contains several refinements of \eqref{eqn:ST_CM}.

\begin{theorem}
\label{thm:main_theorem}
Let $E/\Q$ be a fixed CM elliptic curve.  Let $I \subseteq [0, \pi]$ be a subinterval, and for a prime $p$, define $\theta_p$ by \eqref{eqn:Hasse}.  Let $\delta>0$ and $\theta>0$ be fixed and satisfy $\delta+\theta<\frac{5}{24}$.  There exists a constant  $c>0$ such that if $x\geq 3$, $x^{1-\delta}\leq h\leq x$, and $|I|\geq x^{-\theta}$, then
\[
\frac{1}{h}\sum_{\substack{x < p \le x+h \\ \theta_p \in I}}\log{p} = \mu(I) \Big(1 + O\Big(\exp\Big(-c\frac{(\log x)^{1/3}}{(\log\log x)^{1/3}}\Big)\Big)\Big).
\]
\end{theorem}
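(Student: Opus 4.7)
The plan is to reformulate the sum via the Deuring correspondence as a sum over split prime ideals of $K$ and to apply a short-interval prime number theorem for the family of unitary Hecke Grossencharacter $L$-functions.  By Deuring's theorem every prime $p\nmid N_E$ inert in $K$ has $\theta_p=\pi/2$, so a short-interval prime number theorem in arithmetic progressions modulo $|D_K|$ (obtained from the Vinogradov-Korobov zero-free region and the Huxley zero density estimate for Dirichlet $L$-functions) handles these primes and yields the contribution $\frac{1}{2}\ind_{\pi/2\in I}$ with the desired error.  For split primes $p=\pi\bar\pi$ with $\kp=(\pi)$, the CM Grossencharacter $\psi$ attached to $E$ satisfies $\psi(\kp)=\sqrt{p}\,e^{i\theta_p}$, so I would detect the angular constraint $\theta_p\in I$ by sandwiching $\ind_I$ between Beurling-Selberg majorants and minorants $F^{\pm}$ of degree $M=\lf x^{\theta+\eps}\rf$ (with $\int(F^{+}-F^{-})\,d\theta\ll 1/M$) and expanding in the basis $\{e^{im\theta}\}_{|m|\leq M}$.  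This reduces the theorem to the uniform estimate
\begin{equation*}
S_m(x,h):=\sum_{x<\N\kp\leq x+h}\psi_m(\kp)\log \N\kp=\delta_{m,0}\,h+O\Bigl(h\exp\bigl(-c(\log x)^{1/3}(\log\log x)^{-1/3}\bigr)\Bigr),\quad |m|\leq M,
\end{equation*}
where $\psi_m$ is the unitary Hecke Grossencharacter with $\psi_m(\kp)=e^{im\theta_p}$ on split primes.

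I would prove this uniform bound by Perron's formula applied to $-L'/L(s,\psi_m)$: shifting the contour past the critical line extracts the diagonal main term $\delta_{m,0}\,h$ from the pole of $\zeta_K(s)=L(s,\psi_0)$ at $s=1$, and the remaining contour is bounded via the nontrivial zeros.  Two analytic inputs are required: (i) a Vinogradov-Korobov zero-free region for $L(s,\psi_m)$ with explicit uniformity in $|m|$; and (ii) a hybrid Huxley-type zero density estimate of the form $\sum_{|m|\leq M}N_{\psi_m}(\sigma,T)\ll (MT)^{A(1-\sigma)+\eps}$ for a suitable numerical constant $A$.  Standard contour manipulations then deliver the claimed exponential savings, with the $1/3$ power and the $(\log\log x)^{-1/3}$ factor reflecting the combined loss from short intervals and the strength of the Vinogradov-Korobov region.

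The main obstacle is securing the hybrid zero density estimate with the required exponent $A$, uniformly over the family $\{\psi_m\}_{|m|\leq M}$, together with the matching uniform Vinogradov-Korobov zero-free region.  The numerical threshold $\delta+\theta<\tfrac{5}{24}$ is precisely the range in which the Huxley-type density exponent, combined with the Perron truncation at height $T\approx x/h\approx x^\delta$ and the character cutoff $M\approx x^\theta$, leaves a net saving over the trivial bound; any improvement to either input would widen the admissible range.  Once these inputs are in place, summing against the Fourier coefficients $\widehat{F^{\pm}}(m)$ of the Beurling-Selberg polynomials and optimizing $M$ completes the proof.
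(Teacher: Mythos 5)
Your proposal follows essentially the same route as the paper: split off the inert primes (handled via $\zeta(s)$ and $L(s,\chi_K)$), detect $\theta_p\in I$ for split primes with Beurling--Selberg polynomials of degree $M\approx x^{\theta}$, reduce to partial sums of the unitary Gr\"ossencharacters $\xi^m$ via Deuring's correspondence, and control the resulting sums over zeros with a Vinogradov--Korobov-type zero-free region and a hybrid zero density estimate, with the $5/24$ threshold arising from exactly the numerology you describe. The two analytic inputs you flag as ``to be secured'' are precisely Coleman's theorems for Hecke Gr\"ossencharacter $L$-functions (his zero-free region and his density estimate $N(\sigma,M,T)\ll_{\epsilon}(M^2+T^2)^{(\frac{12}{5}+\epsilon)(1-\sigma)}(\log MT)^{B}$), which the paper cites, so the argument is complete once those references are supplied.
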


Theorem \ref{thm:main_theorem} implies \eqref{eqn:ST_CM} with much more uniformity, specifying the rate of convergence in the limit and showing that one can take $|I|$ to be small (so that the angles $\theta_p$ exhibit small-scale equidistribution) while localizing the primes themselves to a short interval.  When $I=[0,\pi]$, our work shows that if $0<\delta<\frac{5}{24}$ and $x^{1-\delta}\leq h\leq x$, then $\frac{1}{h}\sum_{x<p\leq x+h}\log p\sim 1$.  This was first proved with $\delta\leq \frac{1}{33000}$ by Hoheisel \cite{hoheisel} and gives strong refinement of the classical prime number theorem $\frac{1}{x}\sum_{x<p\leq 2x}\log p\sim 1$.  In Section 2, we reduce the proof of Theorem \ref{thm:main_theorem} to the study of partial sums for Hecke Gr{\"o}ssencharacters defined over an imaginary quadratic field of class number one.  In Section 3, we use classical ``explicit formulae'' to relate the partial sums to the zeros of Hecke Gr{\"o}ssencharacter $L$-functions, which are studied using a zero-free region and zero density estimate due to Coleman.  In Section 4, we describe how our ideas extend to study the distribution of Fourier coefficients of holomorphic cuspidal Hecke eigenforms which are also newforms with complex multiplication.

\subsection*{Acknowledgements} This research was supported by the NSF (DMS-2002265), the NSA (H98230-20-1-0012), the Templeton World Charity Foundation, and the Thomas Jefferson Fund at the University of Virginia. The authors thank Dr. Ken Ono for his helpful suggestions and conversations. Finally, we thank the anonymous referee for their helpful comments.

\section{Preliminary manipulations}
Let $E:y^2 = x^3+ax+b$ be an elliptic curve defined over $\Q$ with conductor $N_E$.  Suppose that $E/\Q$ has complex multiplication by an imaginary quadratic number field $K$ having ring of integers $\mathcal{O}_K$ and absolute norm $\mathrm{N}=\mathrm{N}_{K/\mathbb{Q}}$ defined by $\mathrm{N}\mathfrak{a}=|\mathcal{O}_K/\mathfrak{a}|$ for a nonzero ideal $\mathfrak{a}$ of $\mathcal{O}_K$.  The field $K$ necessarily has class number one (so any fractional ideal $\ka_1/\ka_2$ with $\ka_1$ and $\ka_2\neq (0)$ ideals of $\mathcal{O}_K$ is principally generated), so $K$ must be of the form $\Q(\sqrt{-d})$ with $d\in\{1, 2, 3, 7, 11, 19, 43, 67, 163\}$.  If $K=\Q(\sqrt{-d})$ with $d$ squarefree and $-d\equiv 1\pmod{4}$, then the absolute discriminant $D_K$ is $|d|$; if $-d\not\equiv 1\pmod{4}$, then $D_K=4|d|$.  The absolute discriminant $D_K$ divides $N_E$.

Let $I\subseteq[0,\pi]$ be a subinterval with indicator function $\mathbf{1}_I(\theta)$, and let $\mathbf{1}_{\frac{\pi}{2}\in I}=1$ if $\frac{\pi}{2}\in I$ and $0$ otherwise.  Without loss of generality, we may suppose that $I$ is closed.  For a prime $p$, let $\theta_p\in[0,\pi]$ be defined by \eqref{eqn:Hasse}.  We study
\begin{equation}
\label{eqn:sec3sum_1}
\sum_{x<p\leq x+h}\mathbf{1}_{I}(\theta_p)\log p
\end{equation}
for $x>100 N_E$ (so that if $p>x$, then $p$ divides neither $N_E$ nor $D_K$).  For now, we only assume that $\sqrt{x}\log x\leq h\leq x$.  We will use the Brun--Titchmarsh bound \cite[(1.12)]{MV_large}
\begin{equation}
    \label{eqn:BT}
    \sum_{X<p\leq X+Y}1 \leq 2 \frac{Y}{\log Y},\qquad X>0,\quad Y>1.
\end{equation}

\subsection{Fourier analysis}

For $p>x$, we have that $\theta_p=\frac{\pi}{2}$ if and only if $p$ is inert in $K$.  Therefore, \eqref{eqn:sec3sum_1} equals
\begin{equation}
\label{eqn:sec3sum_2}
\sum_{\substack{x<p\leq x+h \\ \textup{$p$ splits in $K$}}}\mathbf{1}_{I}(\theta_p)\log p+\mathbf{1}_{\frac{\pi}{2}\in I}\sum_{\substack{x<p\leq x+h \\ \textup{$p$ is inert in $K$}}}\log p.
\end{equation}
Let $\chi_K$ denote the Kronecker character associated to $K$.  If $p>x$, then
\[
\chi_K(p) = \begin{cases}
1&\mbox{if $p$ splits in $K$,}\\
-1&\mbox{if $p$ is inert in $K$.}
\end{cases}
\]
It follows that \eqref{eqn:sec3sum_2} equals
\begin{multline*}
\mu(I)h+\mu(I)\Big(\sum_{x<p\leq x+h}\log p - h\Big)+(\mu(I)-\mathbf{1}_{\frac{\pi}{2}\in I})\sum_{x<p\leq x+h}\chi_K(p)\log p\\
+\Big(\sum_{\substack{x<p\leq x+h \\ \textup{$p$ splits in $K$}}}\mathbf{1}_I(\theta_p)\log p-\frac{|I|}{\pi}\sum_{\substack{x<p\leq x+h \\ \textup{$p$ splits in $K$}}}\log p \Big).
\end{multline*}
Writing
\[
\delta(\chi)=\begin{cases}
1&\mbox{if $\chi=1$,}\\
0&\mbox{if $\chi=\chi_K$,}
\end{cases}
\]
we observe that
\begin{multline*}
\mu(I)\Big(\sum_{x<p\leq x+h}\log p - h\Big)+(\mu(I)-\mathbf{1}_{\frac{\pi}{2}\in I})\sum_{x<p\leq x+h}\chi_K(p)\log p\\
\ll \mu(I)\sum_{\chi\in\{1,\chi_K\}}\Big|\sum_{x<p\leq x+h}\chi(p)\log p - \delta(\chi)h\Big|.
\end{multline*}

The following lemma enables us to address the contribution from the split primes.
\begin{lemma}
	\label{lem:BS}
	Let $M\geq 1$, and let $I\subseteq[0,\pi]$ be a closed subinterval.  There exist trigonometric polynomials
	\[
	S_{M}^-(\theta) = \sum_{0\leq m\leq M} b_m^- \cos(m\theta),\qquad S_{M}^+(\theta) = \sum_{0\leq m\leq M} b_m^+ \cos(m\theta)
	\]
	such that the following properties hold.
	\begin{enumerate}
		\item If $\theta\in[0,\pi]$, then $S_M^-(\theta)\leq\mathbf{1}_{I}(\theta)\leq S_M^+(\theta)$.
		\item We have the bound $b_0^{\pm} = \frac{|I|}{\pi}+O(M^{-1})$.
		\item If $1\leq m\leq M$, then $|b_m^{\pm}|\ll \min\{|I|,m^{-1}\}+M^{-1}$.
	\end{enumerate}
\end{lemma}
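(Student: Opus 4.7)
The plan is to invoke the classical Beurling--Selberg majorant/minorant construction, which produces, for any arc $J\subseteq\mathbb{R}/2\pi\mathbb{Z}$ and any integer $M\geq 1$, trigonometric polynomials $T_M^{\pm}$ of degree at most $M$ such that $T_M^-(\theta)\leq \mathbf{1}_J(\theta)\leq T_M^+(\theta)$ for all $\theta$, and $\int_{-\pi}^{\pi}(T_M^+-T_M^-)\,d\theta\ll M^{-1}$. This is a standard result; see, e.g., Vaaler's paper on Beurling's extremal function, or Chapter 1 of Montgomery's \emph{Ten Lectures on the Interface Between Analytic Number Theory and Harmonic Analysis}.

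To obtain an expansion involving only cosines, I would first symmetrize the interval: set $I^{*}=I\cup\{-\theta:\theta\in I\}\subseteq[-\pi,\pi]$, so that $\mathbf{1}_{I^{*}}$ is even on $\mathbb{R}/2\pi\mathbb{Z}$ and agrees with $\mathbf{1}_I$ on $[0,\pi]$. Apply the Beurling--Selberg construction with $J=I^{*}$, then replace each $T_M^{\pm}(\theta)$ by its even part $\tfrac{1}{2}(T_M^{\pm}(\theta)+T_M^{\pm}(-\theta))$. Since $\mathbf{1}_{I^{*}}$ is even, this averaging preserves both the sandwich inequality and the $L^1$-closeness bound, and the resulting even trigonometric polynomials necessarily take the form $S_M^{\pm}(\theta)=\sum_{0\leq m\leq M}b_m^{\pm}\cos(m\theta)$. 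This immediately yields property (1) on $[0,\pi]$.

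Properties (2) and (3) follow from routine Fourier analysis. Orthogonality gives $b_0^{\pm}=\tfrac{1}{2\pi}\int_{-\pi}^{\pi}S_M^{\pm}(\theta)\,d\theta$ and $b_m^{\pm}=\tfrac{1}{\pi}\int_{-\pi}^{\pi}S_M^{\pm}(\theta)\cos(m\theta)\,d\theta$ for $m\geq 1$. Since $0\leq S_M^+-\mathbf{1}_{I^{*}}\leq S_M^+-S_M^-$ and $0\leq \mathbf{1}_{I^{*}}-S_M^-\leq S_M^+-S_M^-$, the $L^1$-closeness of $S_M^{\pm}$ to $\mathbf{1}_{I^{*}}$ transfers to $|b_m^{\pm}-\widehat{\mathbf{1}}_{I^{*}}(m)|\ll M^{-1}$ uniformly in $m\geq 0$. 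A direct computation for $I=[\alpha,\beta]$ identifies the constant Fourier coefficient of $\mathbf{1}_{I^{*}}$ as $|I|/\pi$ and the $m$-th cosine coefficient as $\tfrac{2(\sin m\beta-\sin m\alpha)}{\pi m}$, which is bounded both trivially by $\ll|I|$ and by $\ll 1/m$. Combining these bounds with the $M^{-1}$ error from the Beurling--Selberg approximation yields (2) and (3). No substantive obstacle arises here: the lemma is a packaging of the Beurling--Selberg machinery tailored to even functions on the circle, and the only care required is the symmetrization step that forces the sine coefficients to vanish.
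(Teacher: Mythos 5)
Your proposal is correct and is essentially the paper's proof: both rest on the Beurling--Selberg/Vaaler construction from Montgomery's \emph{Ten Lectures}, symmetrized so that only cosine terms survive, with the coefficient bounds (2) and (3) coming from the $O(M^{-1})$ $L^1$-approximation together with the explicit Fourier coefficients $\frac{2(\sin m\beta-\sin m\alpha)}{\pi m}\ll\min\{|I|,m^{-1}\}$ of the symmetrized indicator. The one imprecision is that $I^{*}=[-\beta,-\alpha]\cup[\alpha,\beta]$ is in general a union of two arcs rather than a single arc, so you cannot invoke the single-arc construction for $J=I^{*}$ directly; instead apply it to each arc and add (which is exactly what the paper does via $S_M^{\pm}(\theta)=F_M^{\pm}(\frac{\theta}{2\pi})+F_M^{\pm}(-\frac{\theta}{2\pi})$, using the vanishing of the minorant at the endpoints to handle any overlap), after which your even-part averaging and coefficient computations go through unchanged.
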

\begin{proof}
Write $I=[\alpha,\beta]$ and $J=[\frac{\alpha}{2\pi},\frac{\beta}{2\pi}]$.  In \cite[Lecture 1]{Montgomery}, it is proved that for all $M\geq 1$, there exist trigonometric polynomials
\[
F_M^{-}(\theta) = \sum_{|n|\leq M}B_m^{-}(n) e^{2\pi i n \theta},\qquad F_M^{+}(\theta) = \sum_{|n|\leq M}B_m^{+}(n) e^{2\pi i n \theta}
\]
satisfying the following properties:
\begin{enumerate}
    \item For all $\theta\in[0,1]$, we have $F_{M}^{-}(\theta)\leq \mathbf{1}_{J}(\theta)\leq F_{M}^{+}(\theta)$.
    \item We have that $F_M^+(\frac{\alpha}{2\pi})=F_M^+(\frac{\beta}{2\pi})=1$ and $F_M^-(\frac{\alpha}{2\pi})=F_M^-(\frac{\beta}{2\pi})=0$.
    \item We have $B_M^{\pm}(0)=|J|+O(M^{-1})$.
    \item If $1\leq|n|\leq M$, then $|B_M^{\pm}(n)|\ll \min\{|J|,|n|^{-1}\}+M^{-1}$.
\end{enumerate}
We take $S_{M}^{\pm}(\theta)=F_{M}^{\pm}(\frac{\theta}{2\pi})+F_{M}^{\pm}(-\frac{\theta}{2\pi})$.
\end{proof}
It follows from Lemma \ref{lem:BS} that if $M\in[1,x]$ is an integer, then
{\small\[
\sum_{0\leq m\leq M} b_m^- \sum_{\substack{x<p\leq x+h \\ \textup{$p$ splits in $K$}}} \cos(m\theta_p)\log p \leq \sum_{\substack{x<p\leq x+h  \\ \textup{$p$ splits in $K$}}}\mathbf{1}_{I}(\theta_p)\log p\leq \sum_{0\leq m\leq M} b_m^+ \sum_{\substack{x<p\leq x+h \\ \textup{$p$ splits in $K$}}} \cos(m\theta_p)\log p
\]}%
and
\begin{multline*}
\sum_{\substack{x<p\leq x+h \\ \textup{$p$ splits in $K$}}}\mathbf{1}_{I}(\theta_p)\log p - \frac{|I|}{\pi}\sum_{\substack{x<p\leq x+h \\ \textup{$p$ splits in $K$}}}\log p\\
\ll \frac{1}{M}\sum_{x<p\leq x+h}\log p+\sum_{1\leq m\leq M}\Big(\frac{1}{M}+\min\Big\{|I|,\frac{1}{m}\Big\}\Big)\Big|\sum_{\substack{x<p\leq x+h \\ \textup{$p$ splits in $K$}}}\cos(m\theta_p)\log p\Big|.
\end{multline*}
Using \eqref{eqn:BT} and partial summation, we find that $\sum_{x<p\leq x+h}\log p\ll h$.  Therefore, in summary, if $x> 100 N_E$, $1\leq M\leq x$, and $\sqrt{x}\log x\leq h\leq x$, then
\begin{multline}
\label{eqn:sec3sum_3}
\Big|\sum_{\substack{x<p\leq x+h \\ \theta_p\in I}}\log p - \mu(I)h\Big|\ll \frac{h}{M}+\mu(I)\sum_{\chi\in\{\chi_K,1\}}\Big|\sum_{x<p\leq x+h}\chi(p)\log p-\delta(\chi)h\Big|\\
+\sum_{1\leq m\leq M}\Big(\frac{1}{M}+\min\Big\{|I|,\frac{1}{m}\Big\}\Big)\Big|\sum_{\substack{x<p\leq x+h \\ \textup{$p$ splits in $K$}}}\cos(m\theta_p)\log p\Big|.
\end{multline}

\subsection{Hecke Gr{\"o}ssencharacters}

The Hasse--Weil $L$-function of $E/\Q$ is
\[
L(s,E)=\Big(\prod_{p|N_E}\frac{1}{1-(2\cos\theta_p)p^{-s}}\Big)\prod_{p\nmid N_E}\frac{1}{1-(2\cos\theta_p)p^{-s}+p^{-2s}}.
\]
Convergence is absolute for $\mathrm{Re}(s)>1$.  The completed $L$-function
\[
\Lambda(s,E) = \Big(\frac{\sqrt{N_E}}{2\pi}\Big)^s \Gamma\Big(s+\frac{1}{2}\Big)L(s,E)
\]
is entire of order one, and there exists $W(E)\in\{-1,1\}$ such that for all $s\in\mathbb{C}$, we have that $\Lambda(s,E)=W(E)\Lambda(1-s,E)$.

We now present the definition of a Hecke Gr{\"o}ssencharacter from \cite[Section 12.2]{Iwaniec}.  Since $K$ has class number one, the multiplicative group $I_K$ be the multiplicative group of nonzero fractional ideals $\ka_1/\ka_2$ (where $\ka_1$ and $\ka_2$ are ideals of $\mathcal{O}_K$) equals the group $P_K$ of principal ideals $\{(a)=a\mathcal{O}_K\colon a\in K^{\times}\}$.  Given a nonzero ideal $\km$ of $\mathcal{O}_K$, we denote by $P_{\km}$ the subgroup of $(a)\in P$ such that $a\equiv 1\pmod{\km}$ (that is, $a-1\in\km$).

Given $u\in\mathbb{Z}$, consider the homomorphism $\xi_{\infty}\colon K^{\times}\to \{|z|=1\}$ given by $\xi_{\infty}(a)=(a/|a|)^u$.  In order to extend $\xi_{\infty}$ to a function on ideals, we let $U_K$ denote the unit group of $\mathcal{O}_K$ and choose $\km$ to satisfy  $\ker(\xi_{\infty})=U_{\km}:=\{\eta\in U_K\colon \eta\equiv 1\pmod{\km}\}$.  We may then extend $\xi_{\infty}$ to a character $\xi$ of $P_{\km}$---if $\ka=(a)$ with $a\equiv 1\pmod{\km}$, then $\xi(\ka)=\xi_{\infty}(a)$.

If $\xi\pmod{\km}$ is a Gr{\"o}ssencharacter, then any ideal $\mathfrak{n}$ of $\mathcal{O}_K$ satisfying $\mathfrak{n}\subseteq\km$ is also a modulus of $\xi$..  Moreover, given $\xi\pmod{\km}$, there may exist a Gr{\"o}ssencharacter $\xi^{*}\pmod{\km^{*}}$ with $\km^{*}\subseteq{\km}$ such that $\xi(\ka)=\xi^*(\ka)$ on $P_{\km}$.  The largest ideal $\km^*$ with this property (which is the greatest common divisor of all such ideals) is the conductor of $\xi$.  If $\km=\km^{*}$, then $\xi$ is called primitive.  For any character $\xi\pmod{\km}$, there exists a unique primitive character $\xi^{*}\pmod{\km^{*}}$, such that $\xi(\ka)=\xi^*(\ka)$ if $\gcd(\ka,\km)=\mathcal{O}_K$.  A character $\xi\colon P_{\km}\to\{|z|=1\}$ extends to a (Hecke) Gr{\"o}ssencharacter $\xi\colon P_K\to\mathbb{C}$ by setting $\xi(\ka)=0$ if $\gcd(\ka,\km)\neq\mathcal{O}_K$.

Now, we introduce a crucial result of Deuring relating elliptic curves with complex multiplication to Gr{\"o}ssencharacters.

\begin{theorem}
\label{thm:Grossencharacter}
If $E/\Q$ has CM by an imaginary quadratic field $K$ of absolute discriminant $D$, then there exists an ideal $\km$ of $\mathcal{O}_K$ such that $\N\km=N_E/D_K$ and a primitive Gr{\"o}ssencharacter $\xi\pmod{\km}$ (with $u=1$) such that
\[
L(s,E)=L(s,\xi):=\prod_{\kp}\frac{1}{1-\xi(\kp)\mathrm{N}\kp^{-s}},
\]
where $\kp$ ranges over the prime ideals of $\mathcal{O}_K$.  In particular, if $p\nmid N_E$, then
\[
1-(2\cos\theta_p)p^{-s}+p^{-2s} = \begin{cases}
(1-\xi(\kp)\mathrm{N}\kp^{-s})(1-\overline{\xi(\kp)}\mathrm{N}\kp^{-s})&\mbox{if $p$ splits, $(p) = \kp\bar{\kp}$, $\mathrm{N}\kp=p$,}\\
1-\xi(\kp)\mathrm{N}\kp^{-s}&\mbox{if $p$ is inert, $(p) = \kp$, $\mathrm{N}\kp=p^2$.}
\end{cases}
\]
\end{theorem}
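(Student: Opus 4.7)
The plan is to derive this from the classical theory of complex multiplication; Deuring's theorem is a deep structural result, so the sketch below outlines how CM produces the Gr\"ossencharacter with the asserted conductor and weight rather than reproving it from scratch.

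First, I would exploit the CM structure to decompose the $\ell$-adic Galois representation. Since $\mathrm{End}(E)\otimes\Q\cong K$, the representation $\rho_{E,\ell}\colon\Gal(\overline{\Q}/\Q)\to\GL_2(\Q_\ell)$ becomes reducible upon restriction to $\Gal(\overline{\Q}/K)$, splitting as $\psi_\ell\oplus\psi_\ell^c$ where $\psi_\ell$ is a one-dimensional $\ell$-adic character of $\Gal(\overline{\Q}/K)$ and $\psi_\ell^c$ is its Galois conjugate. For an unramified prime $\kp$ of $\mathcal{O}_K$ not above $\ell$, the eigenvalue $\psi_\ell(\mathrm{Frob}_\kp)$ is an algebraic integer $\pi_\kp\in\mathcal{O}_K$ with $\pi_\kp\overline{\pi_\kp}=\N\kp$; when $(p)=\kp\bar{\kp}$ splits with $\N\kp=p$, the trace computation gives $\pi_\kp+\overline{\pi_\kp}=2\sqrt{p}\cos\theta_p$, so defining $\xi(\kp):=\pi_\kp/\sqrt{p}=e^{i\theta_p}$ yields a unit-circle value. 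When $(p)=\kp$ is inert, the Frobenius satisfies $\phi_p^2=[-p]$ on $T_\ell(E)$, forcing $\xi(\kp)=-1$ and reproducing the Euler factor $(1+p^{-2s})^{-1}$ corresponding to $\theta_p=\pi/2$.

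Second, I would realize this assignment as a genuine primitive Gr\"ossencharacter. Appealing to the main theorem of complex multiplication, the compatible system $\{\psi_\ell\}$ comes from a single algebraic Hecke character of $K$ whose archimedean component is $\xi_\infty(a)=a/|a|$, giving the weight $u=1$. The finite part $\xi_f$ is then chosen so that $\xi((a))=\xi_\infty(a)\xi_f(a)$ is well-defined on $P_\km$ for $\km$ equal to the common Artin conductor of the $\psi_\ell$. By N\'eron--Ogg--Shafarevich this $\km$ is supported at primes of $K$ above $N_E$; the conductor--discriminant formula applied to $\rho_{E,\ell}\cong\mathrm{Ind}_K^{\Q}\psi_\ell$ gives $N_E=D_K\cdot\N\km$, hence $\N\km=N_E/D_K$. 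Primitivity of $\xi$ modulo this $\km$ holds by construction, since $\km$ is the Artin conductor of the $\ell$-adic avatar.

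Finally, I would verify $L(s,E)=L(s,\xi)$ by comparing Euler factors at every prime: at good split primes both sides produce $(1-\pi_\kp p^{-s-1/2})^{-1}(1-\overline{\pi_\kp}p^{-s-1/2})^{-1}$, at good inert primes both give $(1+p^{-2s})^{-1}$, and at primes dividing $N_E$ the local factors agree because the inertia action on $T_\ell(E)$ is determined on either side by the CM structure of $E$. The hardest step is the middle one: producing an actual algebraic Hecke character whose $\ell$-adic avatars realize all of the $\psi_\ell$ simultaneously with the correct infinity type. This is the substantive content of Deuring's theorem, and it uses the full strength of the main theorem of complex multiplication relating the action of id\`ele class groups on torsion of CM abelian varieties to the reciprocity map. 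Once that foundation is in place, the conductor computation and Euler-factor matching are essentially bookkeeping.
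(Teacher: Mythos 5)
Your sketch is correct and amounts to the standard derivation of Deuring's theorem from the main theorem of complex multiplication, which is precisely the content of the reference the paper cites for this result (Silverman, Theorem II.10.5 and Exercises 2.30, 2.32); the paper gives no independent argument. Your packaging via the $\ell$-adic decomposition $\rho_{E,\ell}|_{\Gal(\overline{\Q}/K)}\cong\psi_\ell\oplus\psi_\ell^c$ and the conductor--discriminant formula for $\mathrm{Ind}_K^{\Q}\psi_\ell$ is an equivalent reformulation of the idelic construction used there, with the correct infinity type $u=1$ and the correct conductor identity $\N\km=N_E/D_K$.
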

\begin{proof}
See Theorem II.10.5 and Exercises 2.30 and 2.32 (pp. 184-185) in \cite{Silverman}.
\end{proof}

We will also use the $L$-functions attached to powers of $\xi$.

\begin{theorem}
\label{thm:Grossencharacter2}
Let $\xi\pmod{\km}$ be as in Theorem \ref{thm:Grossencharacter}.  Let $\xi_1=\xi$ and $\km_1=\km$.  For each integer $m\geq 2$, let $\xi_m\pmod{\km_m}$ be the primitive Gr{\"o}ssencharacter that induces $\xi^m$.  Define
\[
L(s,\xi_m) = \prod_{\kp}\frac{1}{1-\xi_m(\kp)\N\kp^{-s}},\qquad\mathrm{Re}(s)>1.
\]
For each $m\geq 1$, the following statements hold.
\begin{enumerate}
    \item The function
    \[
    \Lambda(s,\xi_m)=\Big(\frac{\sqrt{D_K\mathrm{N}\km_m}}{2\pi}\Big)^{s}\Gamma\Big(s+\frac{m}{2}\Big)L(s,\xi_m)
    \]
    is entire of order one.
    \item There exists a complex number $W(\xi_m)$ of modulus one such that for all $s\in\mathbb{C}$, we have that $\Lambda(s,\xi_m)=W(\xi_m)\Lambda(1-s,\overline{\xi_m})$.
    \item $\N\km_m$ divides $N_E/D_K$.
    \item For each prime $p$ and each integer $j\geq 2$, there exists a complex number $a_m(p^j)$ such that (i) $|a_m(p^j)|\leq 2$, and (ii) if $\mathrm{Re}(s)>1$, then
    \[
    -\frac{L'(s,\xi_m)}{L(s,\xi_m)} = \sum_{\textup{$p$ splits in $K$}}\frac{2\cos(m\theta_p)\log p}{p^{s}}+\sum_{p}\sum_{j=2}^{\infty}\frac{a_m(p^j)\log p}{p^{js}}.
    \]
\end{enumerate}
\end{theorem}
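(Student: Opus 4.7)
My plan is to dispatch the four assertions in sequence. Parts (1)--(3) come from Hecke's analytic theory together with the definition of the conductor, while (4) reduces to a bookkeeping calculation on top of Theorem \ref{thm:Grossencharacter}.

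For (1) and (2), I invoke Hecke's classical theorems on the analytic continuation and functional equation of Gr\"ossencharacter $L$-functions. Since $\xi$ has infinity type $u=1$, the power $\xi^m$ has infinity type $m$, and hence so does the induced primitive character $\xi_m \pmod{\km_m}$. For an imaginary quadratic field, a primitive character of infinity type $m$ produces an $L$-function with archimedean factor $\Gamma(s+m/2)$ and analytic conductor $D_K \N\km_m$; Hecke's theorems then give the entirety of order one and the functional equation $\Lambda(s,\xi_m) = W(\xi_m) \Lambda(1-s, \overline{\xi_m})$ with $|W(\xi_m)|=1$. A modern treatment of exactly this machinery is in \cite[Chapter 3]{Iwaniec}.

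For (3), observe that $\xi^m$ is well-defined on $P_\km$, so by the definition of conductor the primitive character $\xi_m$ inducing $\xi^m$ has conductor $\km_m$ dividing $\km$. Combined with $\N\km = N_E / D_K$ from Theorem \ref{thm:Grossencharacter}, this gives $\N\km_m \mid N_E/D_K$. For (4), I expand
\[
-\frac{L'(s,\xi_m)}{L(s,\xi_m)} = \sum_\kp \sum_{k \geq 1} \frac{\xi_m(\kp)^k \log \N\kp}{\N\kp^{ks}}
\]
and organize by the rational prime $p$ lying below $\kp$. For $p$ split with $(p) = \kp \bar\kp$ and $\N\kp = p$, Theorem \ref{thm:Grossencharacter} identifies (after a choice of labeling) $\xi(\kp) = e^{i\theta_p}$, so $\xi_m(\kp) = e^{im\theta_p}$ and the $k=1$ contributions from $\kp$ and $\bar\kp$ combine to $2\cos(m\theta_p)\log p / p^s$; this produces the first sum. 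Every remaining term lands at a prime power $p^j$ with $j \geq 2$: the $k \geq 2$ contributions for a split prime give $a_m(p^j) \supseteq 2\cos(jm\theta_p)$; an inert prime has $\N\kp = p^2$, so its $k$-th contribution lands at $p^{2k}$ with coefficient $2\xi_m(\kp)^k$; and primes dividing $\km_m$ contribute nothing since $\xi_m(\kp)=0$. In each case the resulting $a_m(p^j)$ is a sum of at most two unit-modulus terms, so $|a_m(p^j)| \leq 2$.

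I foresee no serious obstacle: once Deuring's theorem supplies the identity $\xi(\kp) = e^{i\theta_p}$ on split primes, the formula in (4) is essentially forced, and (1)--(3) are direct quotations from the Hecke/Tate formalism. The only delicate point is matching normalizations of the archimedean factor, which can be confirmed by reading off the infinity type of $\xi_m$ directly from its definition.
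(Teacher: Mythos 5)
Your proposal matches the paper's proof, which simply quotes Hecke's theory (\cite[Theorem 12.3]{Iwaniec}) for (1)--(2), notes that $\km$ is a modulus for $\xi^m$ to get (3), and derives (4) from the relation between $\xi(\kp)$ and $\theta_p$ in Theorem \ref{thm:Grossencharacter}; your write-up just carries out the bookkeeping for (4) explicitly. The only point worth tightening is the treatment of primes dividing $\km$ but not $\km_m$ (and ramified primes), where $\xi_m(\kp)$ need not vanish, though this affects only finitely many Euler factors and is glossed over in the paper as well.
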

\begin{proof}
Parts (1) and (2) follow from \cite[Theorem 12.3]{Iwaniec}.  Part (3) follows since $\km$ is a modulus for $\xi^m$.  Part (4) follows from the relationship between $\xi(\kp)$ and $\theta_p$ in Theorem \ref{thm:Grossencharacter}.
\end{proof}

We also have:

\begin{theorem}
\label{thm:Grossencharacter0}
Let $\chi\in \{1,\chi_K\}$. Let $q$ be the conductor of $\chi$. Define
\[
L(s,\chi_K) = \prod_p (1-\chi(p)p^{-s})^{-1},\qquad L(s,1) = \zeta(s) = \prod_p (1-p^{-s})^{-1},
\]
where $\zeta(s)$ denotes the Riemann zeta function.  The Euler product for $L(s,\chi)$ converges absolutely for $\re(s)>1$. Moreover, if
\[
\Lambda(s,\chi) =  (s(s-1))^{\delta(\chi)}q^{s/2}\pi^{-s/2}\Gamma\Big(\frac{s+\frac{1-\chi(-1)}{2}}{2}\Big)L(s,\chi),
\]
then $\Lambda(s,\chi)$ is an entire function of order one, and there exists a complex number $W(\chi)$ of modulus one such that $\Lambda(s,\chi)=W(\chi)\Lambda(1-s,\chi)$.  If $\mathrm{Re}(s)>1$, then
\[
-\frac{L'(s,\chi)}{L(s,\chi)}=\sum_{p}\sum_{j=1}^{\infty}\frac{\chi(p)^j \log p}{p^{js}}.
\]
\end{theorem}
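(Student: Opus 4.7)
The plan is to recognize that the two cases $\chi = 1$ and $\chi = \chi_K$ are special cases of the classical theory of primitive Dirichlet $L$-functions, and that the four conclusions (absolute convergence of the Euler product, entirety of order one, functional equation, and the logarithmic derivative formula) admit standard proofs from analytic number theory, as found in Davenport's \emph{Multiplicative Number Theory} or the first several chapters of \cite{Iwaniec}. Indeed, Theorem \ref{thm:Grossencharacter0} is essentially the specialization of the general Dirichlet $L$-function functional equation to the two characters we need.

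First, I would establish absolute convergence of the Euler product for $\re(s) > 1$ from $|\chi(p)| \leq 1$ and the absolute convergence of $\sum_p p^{-\sigma}$ for $\sigma > 1$. In the same region, the logarithmic derivative formula follows by termwise differentiation of $\log L(s, \chi) = -\sum_p \log(1 - \chi(p) p^{-s})$ after expanding each logarithm as its power series, which is justified by the absolute convergence just established.

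The heart of the proof is the analytic continuation and functional equation, for which the plan is to follow Riemann's theta-function approach adapted to $\chi$. Since $K$ is imaginary quadratic, $\chi_K$ is a primitive real character of conductor $q = D_K$ with $\chi_K(-1) = -1$, matching the Gamma shift $(1 - \chi(-1))/2 = 1$; for $\chi = 1$ one has $q = 1$, $\chi(-1) = +1$, and shift $0$, recovering the classical Riemann completion of $\zeta$. For each $\chi$ I would form the twisted theta series
\[
\theta(y, \chi) = \sum_{n \in \Z} n^{(1 - \chi(-1))/2}\, \chi(n)\, e^{-\pi n^2 y / q},
\]
apply Poisson summation together with the evaluation of the Gauss sum $\tau(\chi)$ to derive the transformation law $\theta(1/y, \chi) = W(\chi)\, y^{1/2 + (1-\chi(-1))/2}\, \theta(y, \overline{\chi})$ for an explicit $W(\chi)$ of modulus one (with $\overline{\chi} = \chi$ in both cases since $\chi$ is real, so in fact $W(\chi) = \pm 1$), and then take the Mellin transform, splitting the integral at $y = 1$. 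For $\chi = \chi_K$ the resulting integral representation is entire and manifestly invariant under $s \mapsto 1 - s$; for $\chi = 1$ the boundary terms contribute simple poles at $s = 0$ and $s = 1$, which are precisely cancelled by the prefactor $s(s-1)$ present exactly when $\delta(\chi) = 1$.

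To see that $\Lambda(s, \chi)$ is of order one, I would combine Stirling's formula for the Gamma factor, the trivial bound on $L(s, \chi)$ for $\re(s) > 1$, the functional equation for $\re(s) < 0$, and Phragm\'en--Lindel\"of on the critical strip to obtain $|\Lambda(s, \chi)| \leq \exp(C|s|\log|s|)$ uniformly on vertical strips. I do not anticipate a serious obstacle in any step; the whole argument is entirely classical, and the only part requiring genuine care is the bookkeeping of the conductor $q$ and the parity $\chi(-1)$ across the two cases, together with the Gauss sum evaluation that determines $W(\chi)$. In practice I would cite the relevant chapters of \cite{Iwaniec} in lieu of reproducing the full classical computation.
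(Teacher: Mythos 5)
Your proposal is correct and is exactly the classical argument (Euler product convergence, theta-function/Poisson summation with Gauss sums for the functional equation, $s(s-1)$ cancelling the poles of $\zeta$, and Stirling plus Phragm\'en--Lindel\"of for order one) that the paper itself invokes by simply citing \S 8--\S 12 of Davenport. There is no substantive difference in approach; you have merely sketched the proof that the cited reference carries out in full.
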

\begin{proof}
See \cite[\S 8-\S 12]{davenport}.
\end{proof}

Consider the von Mangoldt function
\[
\Lambda(n) = \begin{cases}
\log p&\mbox{if there exists a prime $p$ and an integer $j\geq 1$ such that $n=p^j$,}\\
0&\mbox{otherwise.}
\end{cases}
\]
Using Theorem \ref{thm:Grossencharacter2}, \eqref{eqn:BT}, and partial summation, we have that
\begin{align*}
    &\Big|\sum_{\substack{x<p\leq x+h \\ \textup{$p$ splits in $K$}}}\cos(m\theta_p)\log p-\frac{1}{2}\sum_{x<n\leq x+h}a_m(n)\Lambda(n)\Big|\leq \sum_{j=2}^{\infty}\sum_{x<p^j\leq x+h}\log p\ll \frac{h}{\sqrt{x}},\\
    &\Big|\sum_{x<p\leq x+h}\chi(p)\log p-\sum_{x<n\leq x+h}\chi(n)\Lambda(n)\Big|\leq \sum_{j=2}^{\infty}\sum_{x<p^j\leq x+h}\log p\ll \frac{h}{\sqrt{x}}.
\end{align*}
If we assume that $M\leq \sqrt{x}(\log x)^{-1}$, then our work shows that \eqref{eqn:sec3sum_3} is
\begin{multline}
\label{eqn:sec3sum_5}
\ll\frac{h}{M}+ \sum_{1\leq m\leq M}\Big(\frac{1}{M}+\min\Big\{|I|,\frac{1}{m}\Big\}\Big)\Big|\sum_{x<n\leq x+h}a_m(n)\Lambda(n)\Big|\\
+\mu(I)\sum_{\chi\in\{1,\chi_K\}}\Big|\sum_{x<n\leq x+h}\chi(n)\Lambda(n)-\delta(\chi)h\Big|.
\end{multline}


\section{Proof of Theorem \ref{thm:main_theorem}}

The Euler product that defines $L(s,\xi_m)$ converges absolutely for $\mathrm{Re}(s)>1$, and it follows that $L(s,\xi_m)\neq 0$ in that region.  By the functional equation of $\Lambda(s,\xi_m)$, it follows that unless $j\geq 0$ is an integer and $s=-j-\frac{m}{2}$, we have that $L(s,\xi_m)\neq 0$ for $\re(s)<0$.  These trivial zeros arise as poles of $\Gamma(s+\frac{m}{2})$.  Because $\Lambda(s,\xi_m)$ is entire of order one, it exhibits a Hadamard factorization---there exist complex numbers $A_m$ and $B_m$ such that
\[
\Lambda(s,\xi_m)=e^{A_m+B_m s}\prod_{\rho_m}\Big(1-\frac{s}{\rho_m}\Big)e^{\frac{s}{\rho_m}},
\]
where $\rho_m=\beta_m+i\gamma_m$ denotes a nontrivial zero of $L(s,\xi_m)$.  These zeros all satisfy $0\leq\beta_m\leq 1$.  The nontrivial zeros are directly related to the partial sums of $a_m(n)\Lambda(n)$ by means of an ``explicit formula".

\begin{lemma}
	\label{lem:explicit_formula}
	If $x> 100 N_E$, $1\leq m\leq M\leq \sqrt{x}(\log x)^{-1}$, and $\sqrt{x}\log x\leq h\leq x$, then
	\[
	\sum_{x<n\leq x+h}a_m(n)\Lambda(n) = -\sum_{\substack{\rho_m=\beta_m+i\gamma_m \\ |\gamma_m|\leq Mx/h \\ 0\leq\beta_m\leq 1}}\frac{(x+h)^{\rho_m}-x^{\rho_m}}{\rho_m}+O\Big(\frac{h(\log x)^2}{M}\Big).
	\]
\end{lemma}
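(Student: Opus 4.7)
The plan is the classical Perron-plus-contour-shift derivation of an explicit formula, adapted to a short interval and to the Gr{\"o}ssencharacter $L$-function $L(s,\xi_m)$ from Theorem \ref{thm:Grossencharacter2}. First I apply the truncated Perron formula to $\sum_{n \le y} a_m(n)\Lambda(n)$ on the line $\re s = c$, with $c = 1 + 1/\log x$, up to height $T = Mx/h$, using the Dirichlet series representation of $-L'(s,\xi_m)/L(s,\xi_m)$ in Theorem \ref{thm:Grossencharacter2}(4). The hypotheses place $T$ comfortably in $[M,\,\sqrt{x}/\log x]$. Subtracting the Perron identity at $y=x$ from the one at $y=x+h$ replaces the usual Perron kernel $y^s/s$ by the short-interval kernel $((x+h)^s - x^s)/s$, which satisfies $|((x+h)^s - x^s)/s| \le h\,x^{\sigma-1}$ on $\re s = \sigma$. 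Together with $|a_m(n)\Lambda(n)| \le 2\Lambda(n)$, the usual analysis of the Perron-truncation error yields a combined error of $\ll x(\log x)^2/T = h(\log x)^2/M$, which is admissible.

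Second, I shift the contour from $[c - iT,\,c+iT]$ to $[-U - iT,\,-U+iT]$ for a small fixed constant $U \in (0,1/2)$ chosen so as not to land on a trivial zero of $L(s,\xi_m)$ for any $m\ge 1$ (recall these lie at $s = -j - m/2$, $j \ge 0$). By Cauchy's theorem the integrand $(-L'/L)(s,\xi_m)\cdot((x+h)^s - x^s)/s$ picks up $2\pi i$ times the sum of residues at its poles inside the rectangle, which are exactly the nontrivial zeros $\rho_m = \beta_m + i\gamma_m$ with $|\gamma_m| \le T$. Each such simple zero contributes residue $-((x+h)^{\rho_m}-x^{\rho_m})/\rho_m$, producing precisely the main term of the lemma. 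The apparent pole of the kernel at $s=0$ is removable with value $\log((x+h)/x) \ll h/x$, contributing negligibly.

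It remains to bound the shifted-contour integral. On the left vertical segment $\re s = -U$, the functional equation of Theorem \ref{thm:Grossencharacter2}(2) converts $L'/L(s,\xi_m)$ to its value on $\re s = 1 + U$ (where it is uniformly bounded) plus Gamma-factor logarithmic derivatives that are $\ll \log(|t|+2)$ by Stirling; combining with $|((x+h)^s - x^s)/s| \le h\,x^{-U-1}$ and integrating over $t \in [-T,T]$ yields a contribution that is a power of $x$ smaller than $h(\log x)^2/M$. On the two horizontal segments at height $\pm T$, I invoke the classical estimate $|L'/L(\sigma \pm iT,\xi_m)| \ll (\log T)^2$, valid after shifting $T$ by an amount in $[-1,1]$ so that the chosen height has distance $\gg 1/\log T$ from every $\gamma_m$ (possible because at most $O(\log T)$ zeros lie in any unit height window at height $\asymp T$). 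Combined with $|((x+h)^s - x^s)/s| \ll x^\sigma/T$ on these segments and integration over $\sigma \in [-U, c]$, the horizontal contribution is $\ll x(\log x)^2/T = h(\log x)^2/M$, matching the target.

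The main obstacle is this horizontal $L'/L$ estimate, which in turn uses the Hadamard factorization from Theorem \ref{thm:Grossencharacter2}(1) to subtract out the zeros near height $T$, together with the zero-counting estimate $N(T+1,\xi_m) - N(T,\xi_m) \ll \log T$. Both are textbook once the functional equation is in hand, proceeding exactly as for Dirichlet $L$-functions in Davenport Chapters 17--19; the only new input is the analytic conductor $\N\km_m\cdot(|t|+2)$, but since $\N\km_m$ divides the constant $N_E/D_K$ by Theorem \ref{thm:Grossencharacter2}(3), it contributes only $O(1)$ to the logarithms and leaves the stated error $O(h(\log x)^2/M)$ intact.
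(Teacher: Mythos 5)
Your route is, in substance, identical to the paper's: the paper treats $L(s,\xi_m)$ as a degree-two $L$-function over $\Q$ with $O(1)$ conductor and analytic conductor of size polynomial in $x$, applies the standard truncated explicit formula \cite[Eq.\ 5.53]{Iwaniec-Kowalski} at the endpoints $x$ and $x+h$ with $T=Mx/h$, and subtracts; your Perron-plus-contour-shift derivation with the differenced kernel $((x+h)^s-x^s)/s$ is exactly the proof behind that citation.

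One step, as written, does not hold over the full parameter range of the lemma. On the left vertical segment $\re(s)=-U$ you bound the kernel by $h x^{-U-1}$ and integrate over $|t|\le T$, giving $\ll h x^{-U-1}\cdot T\log(mT)\ll M x^{-U}\log x$. Comparing with the target $h(\log x)^2/M$, the ratio is $M^2x^{-U}/(h\log x)$; at the extremes $M=\sqrt{x}/\log x$ and $h=\sqrt{x}\log x$ this is $x^{1/2-U}(\log x)^{-4}$, which tends to infinity for any fixed $U<1/2$, so the contribution is not ``a power of $x$ smaller'' than the admissible error --- it can be a power of $x$ larger. The repair is standard: away from the real axis use the decay of the kernel in $|s|$ rather than the short-interval saving, i.e.\ $|((x+h)^s-x^s)/s|\ll x^{-U}/|t|$ for $|t|\ge 1$, so the segment contributes $\ll x^{-U}(\log(mT))^2$ plus $O(x^{-U}\log x)$ from $|t|\le 1$, which is genuinely negligible. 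Two smaller remarks: the hypotheses give $T=Mx/h\le x/(\log x)^2$, not $T\le\sqrt{x}/\log x$ (nothing depends on this); and the Stirling contribution of the archimedean factor $\Gamma(s+\tfrac{m}{2})$ is $\ll\log(m(|t|+2))$, not $\log(|t|+2)$, so the analytic conductor grows with $m$ --- harmless here since $m\le M\le\sqrt{x}$ keeps every logarithm $\ll\log x$. With these adjustments your argument is complete and coincides with the paper's.
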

\begin{proof}
Since at most two prime ideals of $\mathcal{O}_K$ lie above a given rational prime $p\nmid D_K$, we can view $L(s,\xi_m)$ as an $L$-function with an Euler product of degree 2 over $\Q$.  The conductor of this $L$-function is $D_K\mathrm{N}\km_m$ (which, since $E$ is fixed and there are only nine imaginary quadratic fields of class number 1, is $O(1)$ per Theorem \ref{thm:Grossencharacter2}), and the analytic conductor $\mathfrak{q}_m$ (per \cite[Equation 5.7]{Iwaniec-Kowalski}) satisfies $\log \mathfrak{q}_m\asymp \log m\ll \log M\ll \log x$.  By \cite[Equation 5.53]{Iwaniec-Kowalski}, it follows that
	\[
  \sum_{n\leq x}a_m(n)\Lambda(n) = -\sum_{\substack{\rho_m=\beta+m+i\gamma_m \\ |\gamma_m|\leq Mx/h \\ 0\leq\beta_m\leq 1}}\frac{x^{\rho_m}-1}{\rho_m}+O\Big(\frac{h(\log x)^2}{M}\Big)
	\]
	and
	\[
  \sum_{n\leq x+h}a_m(n)\Lambda(n) = -\sum_{\substack{\rho_m=\beta+m+i\gamma_m \\ |\gamma_m|\leq Mx/h \\ 0\leq\beta_m\leq 1}}\frac{(x+h)^{\rho_m}-1}{\rho_m}+O\Big(\frac{h(\log x)^2}{M}\Big).
	\]
	Subtracting the former from the latter, we obtain the desired result.
\end{proof}

The corresponding results for Dirichlet $L$-functions are proved in \cite[\S15-\S17,\S19]{davenport}.

\begin{lemma}
\label{lem:dirichletL-func}
Let $\chi\in\{1,\chi_K\}$.  If $\rho_{\chi}=\beta_{\chi}+i\gamma_{\chi}$ denotes a nontrivial zero of $L(s,\chi)$, $x>100N_E$, $1\leq M\leq \sqrt{x}(\log x)^{-1}$, and $\sqrt{x}\log x\leq h\leq x$, then
\[
\sum_{x<n\leq x+h} \chi(n)\Lambda(n) - \delta(\chi)h = -\sum_{\substack{\rho_{\chi}=\beta_{\chi}+i\gamma_{\chi} \\ |\gamma_{\chi}|\leq Mx/h \\ 0\leq\beta_{\chi}\leq 1}}\frac{(x+h)^{\rho_{\chi}}-x^{\rho_{\chi}}}{\rho_{\chi}}+O\Big(\frac{h(\log x)^2}{M}\Big).
\]
\end{lemma}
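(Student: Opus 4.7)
The plan is to mimic the proof of Lemma \ref{lem:explicit_formula} step for step, only now for the classical Dirichlet setting where the reference \cite{davenport} already contains everything we need. The upshot is that the lemma follows from a truncated Riemann--von Mangoldt explicit formula applied at the two endpoints $y=x$ and $y=x+h$, subtracting one from the other.

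First I would invoke the truncated explicit formula for $\psi(y,\chi)=\sum_{n\leq y}\chi(n)\Lambda(n)$ developed in \cite[\S17,\S19]{davenport} (Dirichlet's theorem with error term). For $\chi\in\{1,\chi_K\}$ and any parameter $T\geq 2$, this gives
\[
\sum_{n\leq y}\chi(n)\Lambda(n) = \delta(\chi)\,y - \sum_{\substack{\rho_\chi=\beta_\chi+i\gamma_\chi \\ |\gamma_\chi|\leq T \\ 0\leq \beta_\chi\leq 1}}\frac{y^{\rho_\chi}}{\rho_\chi} + O\Big(\frac{y(\log(qyT))^2}{T}+\log y\Big),
\]
where $q\ll 1$ is the conductor of $\chi$ and the trivial zero contribution is absorbed into the error. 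The main term $\delta(\chi)y$ arises from the residue at $s=1$, which is present only when $\chi=1$.

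Next I would apply this formula at $y=x+h$ and $y=x$ with the same truncation $T=Mx/h$, and subtract. The main terms give the $\delta(\chi)h$ appearing in the statement, and the zero sums combine into $-\sum_{|\gamma_\chi|\leq T}((x+h)^{\rho_\chi}-x^{\rho_\chi})/\rho_\chi$. For the error, the hypotheses $1\leq M\leq \sqrt{x}(\log x)^{-1}$ and $\sqrt{x}\log x\leq h\leq x$ force $T=Mx/h\leq x$ and $\log(qxT)\ll \log x$, so the error is
\[
O\Big(\frac{x(\log x)^2}{T}\Big) = O\Big(\frac{h(\log x)^2}{M}\Big),
\]
exactly as desired; the $O(\log y)$ terms and any further lower order contributions are absorbed without difficulty.

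The main technical obstacle in the underlying reference is the standard one: to obtain the $T^{-1}(\log)^2$ shape of the error, one needs an upper bound on the number of nontrivial zeros in dyadic horizontal strips, namely $N(T+1,\chi)-N(T,\chi)\ll \log(qT)$, which is what lets the contour integral from Perron's formula be controlled after the contour is shifted to the left of the critical line. For $\chi\in\{1,\chi_K\}$ this is classical (see \cite[\S15,\S16]{davenport}), so there is no genuine new difficulty here; the only thing to check is that all implied constants depend at most on the fixed curve $E$ (through the fixed conductor $q$), which is immediate since $\chi_K$ and $1$ are both fixed once $E$ is fixed.
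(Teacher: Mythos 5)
Your proposal matches the paper's treatment: the paper simply cites \cite[\S 15--\S 17, \S 19]{davenport} for this lemma, and the intended argument is exactly yours --- apply the truncated explicit formula for $\psi(y,\chi)$ at $y=x+h$ and $y=x$ with $T=Mx/h$ and subtract, which is also precisely how the paper proves the companion Lemma \ref{lem:explicit_formula}. Your verification that the error terms collapse to $O(h(\log x)^2/M)$ under the stated constraints on $M$ and $h$ is correct.
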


\subsection{Reduction to the zeros}
Let $0<\epsilon<\frac{5}{24}$.  Choose $\delta>0$ and $\theta>0$ so that $\delta+\theta\leq\frac{5}{24}-\epsilon$, and let $0<\epsilon_1<\theta$.  Recall that $x>100N_E$, and assume that $|I|\geq x^{\epsilon_1-\theta}$, $M= x^{\theta}$, and $x^{1-\delta}\leq h\leq x$.  Using Lemma \ref{lem:explicit_formula} and Theorem \ref{lem:dirichletL-func}, we find that \eqref{eqn:sec3sum_5} is
\begin{multline}
	\label{eqn:sec4sum_1}
	\ll \sum_{1\leq m\leq M}\Big(\frac{1}{M}+\min\Big\{|I|,\frac{1}{m}\Big\}\Big)\sum_{\substack{\rho_m=\beta_m+i\gamma_m \\ |\gamma_m|\leq Mx/h \\ 0\leq\beta_m\leq 1}}\Big|\frac{(x+h)^{\rho_m}-x^{\rho_m}}{\rho_m}\Big|\\
+\mu(I)\sum_{\chi\in\{1,\chi_K\}}\sum_{\substack{\rho_{\chi}=\beta_{\chi}+i\gamma_{\chi} \\ |\gamma_{\chi}|\leq Mx/h \\ 0\leq\beta_{\chi}\leq 1}}\Big|\frac{(x+h)^{\rho_{\chi}}-x^{\rho_{\chi}}}{\rho_{\chi}}\Big|+\frac{h(\log x)^3}{M}.
\end{multline}
Note that $1/M \ll \mu(I) x^{-\epsilon_1}$.  Therefore, \eqref{eqn:sec4sum_1} is
\begin{equation}
    \label{eqn:sec4sum_2}
 	\ll \mu(I)\Big(\sum_{\substack{\rho=\beta+i\gamma \\ |\gamma|\leq Mx/h \\ 0\leq\beta\leq 1}}\Big|\frac{(x+h)^{\rho}-x^{\rho}}{\rho}\Big|+h x^{-\epsilon_1}(\log x)^3\Big).
\end{equation}
Here, $\rho$ ranges over all nontrivial zeros of
\[
L(s,M):=\zeta(s)L(s,\chi_K)\prod_{1\leq m\leq M}L(s,\xi_m).
\]
Since $\rho=\beta+i\gamma$ with $0\leq\beta\leq 1$ and $\gamma\in\mathbb{R}$, it follows that
\[
\Big|\frac{(x+h)^{\rho}-x^{\rho}}{\rho}\Big|=\Big|\int_{x}^{x+h}t^{\rho-1}dt\Big|\leq \int_x^{x+h}t^{\beta-1}dt\ll hx^{\beta-1},
\]
so \eqref{eqn:sec4sum_2} is
\begin{equation}
	\label{eqn:sec4sum_3}
	\ll_{\epsilon_1} \mu(I)h\Big(\sum_{\substack{\rho=\beta+i\gamma \\ |\gamma|\leq Mx/h \\ 0\leq\beta\leq 1}}x^{\beta-1}+x^{-\epsilon_1/2}\Big).
\end{equation}
Note that $L(s,\overline{\xi_m}) = \overline{L(\overline{s},\xi_m)}$ and $L(s,\bar{\chi})=\overline{L(\bar{s},\chi)}$.  Therefore, if $\beta+i\gamma$ is a nontrivial zero of $L(s,M)$, then so is $1-\beta+i\gamma$.  Consequently, \eqref{eqn:sec4sum_3} is
\begin{equation}
	\label{eqn:sec4sum_4}
	\ll_{\epsilon_1} \mu(I)h\Big(\sum_{\substack{\rho=\beta+i\gamma \\ |\gamma|\leq Mx/h \\ \frac{1}{2}\leq\beta\leq 1}}x^{\beta-1}+x^{-\epsilon_1/2}\Big).
\end{equation}
We now proceed as in the work of Hoheisel \cite{hoheisel}.

\subsection{Zeros of Hecke Gr{\"o}ssencharacter {\it L}-functions}

Define
\[
N(\sigma,M,T):=\#\{\rho=\beta+i\gamma\colon |\gamma|\leq T,~\beta\geq\sigma\},
\]
where $\rho$ runs through the nontrivial zeros of $L(s,M)$ counted with multiplicity.  By partial summation, \eqref{eqn:sec4sum_4} is
\begin{equation}
\label{eqn:sec4sum_5}
\ll_{\epsilon_1} (\mathbf{1}_{\frac{\pi}{2}\in I}+|I|) h\Big((\log x)\max_{\frac{1}{2}\leq \sigma\leq 1}x^{\sigma-1}N(\sigma,M,Mx/h)+x^{-\epsilon_1 / 2}\Big).
\end{equation}
We use the following zero-free region for the $L$-functions in question.
\begin{lemma}
\label{ZFR}
There exists a constant $c > 0$, independent of $M$, such that $L(s,M)\neq 0$ in the region
\[
\re(s) \ge 1 - \frac{c}{(\log (|\im(s)|+3)M)^{2/3}  (\log \log (|\im(s)|+3)M)^{1/3}}.
\]
\end{lemma}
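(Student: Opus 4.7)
The plan is to reduce the zero-free region for $L(s,M) = \zeta(s) L(s, \chi_K) \prod_{m=1}^M L(s, \xi_m)$ to individual zero-free regions for each factor, since any zero of the product must be a zero of some factor. The factors split into three types: the Riemann zeta function, the fixed Dirichlet $L$-function $L(s,\chi_K)$, and the family of Hecke Gr\"ossencharacter $L$-functions $\{L(s, \xi_m)\}_{1 \le m \le M}$, each of which enjoys a Vinogradov--Korobov type zero-free region that is already known in the literature.

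For $\zeta(s)$, one invokes the classical Vinogradov--Korobov zero-free region. For $L(s, \chi_K)$, since $\chi_K$ is a fixed nontrivial quadratic Dirichlet character (depending only on the fixed curve $E$), the Vinogradov--Korobov zero-free region for Dirichlet $L$-functions applies, with Siegel's theorem excluding any real exceptional zero (the implied constant depends only on $E$, which is harmless here). For each $L(s, \xi_m)$, I invoke Coleman's zero-free region for Hecke $L$-functions over the imaginary quadratic field $K$, which yields
\[
\re(s) \ge 1 - \frac{c'}{(\log \mathfrak{q}_m(|\im(s)|+3))^{2/3} (\log\log \mathfrak{q}_m(|\im(s)|+3))^{1/3}}
\]
for some $c' > 0$ depending only on $K$, where $\mathfrak{q}_m$ denotes the analytic conductor of $L(s,\xi_m)$. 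Coleman's proof follows the standard Vinogradov--Korobov scheme: one applies the nonnegativity of $3 + 4\cos\theta + \cos 2\theta$ to a suitable linear combination of $-\re(L'/L)$ values (computed from Theorem~\ref{thm:Grossencharacter2}(4)) and then uses Vinogradov-type bounds for exponential sums over prime ideals of $\mathcal{O}_K$ to control $L'/L$ in a thin strip just left of $\re(s)=1$. A Siegel-type real zero is ruled out because $\xi_m$ has infinity type $u = m \ge 1$, so $\overline{\xi_m} \ne \xi_m$ and $L(s,\xi_m)$ has no pole at $s=1$.

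To conclude, I combine the three zero-free regions. By Theorem~\ref{thm:Grossencharacter2}(1) and (3), the analytic conductor satisfies $\log \mathfrak{q}_m \ll \log m \le \log M$; consequently $\log \mathfrak{q}_m(|\im(s)|+3) \ll \log M(|\im(s)|+3)$ uniformly in $1 \le m \le M$. Taking the minimum over the three families of zero-free regions and absorbing constants, the intersection contains the region claimed for $L(s,M)$. The main obstacle is ensuring that Coleman's constant $c'$ is uniform across the whole family $\{\xi_m\}$ and that the conductor dependence enters only logarithmically; this is precisely what Coleman's refinement of the classical Vinogradov--Korobov argument provides, so the hard analytic work is deferred to the cited reference.
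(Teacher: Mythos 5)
Your argument is correct and matches the paper's proof, which simply cites Coleman's zero-free region \cite[Theorem 2]{colemanZFR} and implicitly relies on exactly the factor-by-factor reduction, conductor bound $\log\mathfrak{q}_m\ll\log M$, and absence of exceptional zeros that you spell out. The only cosmetic difference is that you treat $\zeta(s)$ and $L(s,\chi_K)$ via the classical Vinogradov--Korobov region for Dirichlet $L$-functions, whereas their product is also covered by Coleman's theorem as the Dedekind zeta function $\zeta_K(s)$ (the trivial Gr\"ossencharacter); either route is fine.
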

\begin{proof}
This follows from work of Coleman \cite[Theorem 2]{colemanZFR}.
\end{proof}

Lemma \ref{ZFR} implies that \eqref{eqn:sec4sum_5} is
\begin{equation}
	\label{eqn:sec4sum_6}
\ll_{\epsilon_1} \mu(I)h\Big((\log x)\max_{\frac{1}{2}\leq \sigma\leq 1-\frac{c}{(\log M^2 x/h)^{2/3}(\log\log M^2 x/h)^{1/3}}}x^{\sigma-1}N(\sigma,M,Mx/h)+x^{-\epsilon_1 / 2}\Big).\hspace{-1.5mm}
\end{equation}
We now employ a zero density estimate  for Hecke Gr{\"o}ssencharacter $L$-functions due to Coleman \cite[Corollary 8.2]{coleman1990}, verifying that $N(\sigma,M,T)$ becomes very small as $\sigma\to 1^-$.  This serves as a substitute for the yet-unproven generalized Riemann hypothesis for $L(s,M)$:  If $\re(s)>\frac{1}{2}$, then  $L(s,M)\neq 0$.
\begin{lemma}
\label{ZDE}
For all $\epsilon>0$, there exists a constant $B>0$ such that if $M,T\geq 2$, then
\[
N(\sigma,M,T) \ll_{\epsilon} (M^2+T^2)^{(\frac{12}{5}+\epsilon)(1 - \sigma)}(\log{MT})^B,\qquad \tfrac{1}{2}\leq\sigma\leq 1.
\]
\end{lemma}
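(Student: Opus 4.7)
The plan is to adapt Huxley's zero density method to the family of Hecke Gr{\"o}ssencharacter $L$-functions $\{L(s,\xi_m) : 1 \leq m \leq M\}$. The exponent $\frac{12}{5}(1-\sigma)$ is the classical Huxley exponent, obtained by combining a large sieve (second moment) with a fourth moment bound on the critical line via H{\"o}lder's inequality. The weight $(M^2+T^2)^{1-\sigma}$ reflects the analytic conductor of the family: by Theorem \ref{thm:Grossencharacter2}, since $\N\km_m = O(1)$ and the gamma factor is $\Gamma(s+m/2)$, the analytic conductor of $L(s,\xi_m)$ at height $|t| \leq T$ is $\asymp (m+|t|+1)^2 \ll M^2 + T^2$.

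First, I would construct a zero detector. Mollify $L(s,\xi_m)$ by a truncation $M_X(s,\xi_m) = \sum_{\N\ka \leq X}\mu_K(\ka)\xi_m(\ka)\N\ka^{-s}$ of its inverse Dirichlet series, then apply the approximate functional equation and a contour shift to obtain, for each nontrivial zero $\rho_m = \beta_m + i\gamma_m$ with $\beta_m \geq \sigma$ and $|\gamma_m|\leq T$, an inequality of the shape
\begin{equation*}
1 \ll \Big|\sum_{Y_1 \leq \N\ka \leq Y_2}\frac{c_m(\ka)}{\N\ka^{\rho_m}}\Big|,
\end{equation*}
where $c_m(\ka)$ is the convolution of the Dirichlet coefficients of $L(s,\xi_m)$ with those of the mollifier, and the ranges $Y_1 < Y_2$ depend on $X$ and on the analytic conductor.

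Second, I would pass to a well-separated subsystem of zeros. By a Montgomery--Vaughan pigeonhole, extract a collection of zeros whose ordinates (for each fixed $m$) are $1$-separated and whose cardinality is $\gg N(\sigma,M,T)/\log(MT)$. Summing $|\cdot|^2$ of the detector over this subsystem and applying a hybrid large-sieve inequality of the form
\begin{equation*}
\sum_{m \leq M}\sum_{\gamma_m \in \mathcal{R}_m}\Big|\sum_{\N\ka \leq Y} a(\ka)\xi_m(\ka)\N\ka^{-i\gamma_m}\Big|^2 \ll (M^2+T^2+Y)(\log MTY)^B \sum_{\N\ka \leq Y}|a(\ka)|^2
\end{equation*}
yields a second-moment bound. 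Balancing this against the mean fourth-moment estimate
\begin{equation*}
\sum_{m \leq M}\int_{-T}^{T}\big|L(\tfrac{1}{2}+it,\xi_m)\big|^4 \, dt \ll (M^2+T^2)(\log MT)^B
\end{equation*}
via H{\"o}lder's inequality, with $X, Y_1, Y_2$ optimized as in Huxley's original argument, produces the claimed bound with exponent $\frac{12}{5}+\epsilon$.

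The main obstacle is the hybrid fourth-moment estimate, uniform in $m$ and $t$. For fixed $m$ and varying $t$ this is classical (an Ingham-type estimate on the critical line); the essential new point is that the family $\{\xi_m\}$ consists not of independent characters of varying modulus but of powers of a single Gr{\"o}ssencharacter with escalating infinity type. To put the $m$-aspect on equal footing with the $t$-aspect, one analyzes the gamma factor $\Gamma(s+m/2)$ by stationary phase, effectively trading large $m$ for a shift of the line of integration, thus reducing the hybrid mean value to $t$-aspect estimates on a suitably augmented critical line. This stationary-phase step, combined with the large sieve for Hecke characters of bounded modulus over $K$, is precisely what Coleman establishes in \cite{coleman1990}.
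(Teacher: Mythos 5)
Your outline is correct and ultimately rests on the same foundation as the paper: the paper proves this lemma simply by citing Coleman's Corollary 8.2, and your sketch is an accurate reconstruction of the Huxley-type argument (zero detector, hybrid large sieve, hybrid fourth moment, conductor $\asymp(m+|t|+1)^2$) that Coleman carries out there, with the genuinely hard hybrid mean-value estimates correctly attributed to him. No gap to report.
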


\subsection{Proof of Theorem \ref{thm:main_theorem}}

We apply Lemma \ref{ZDE} to \eqref{eqn:sec4sum_6} to achieve the bound
\begin{multline*}
	\max_{\frac{1}{2}\leq \sigma\leq 1-\frac{c}{(\log M^2 x/h)^{2/3}(\log\log M^2 x/h)^{1/3}}}x^{\sigma-1}N(\sigma,M,Mx/h)\\
	\ll_{\epsilon} (\log x)^{B}\max_{\frac{1}{2}\leq \sigma\leq 1-\frac{c}{(\log M^2 x/h)^{2/3}(\log\log M^2 x/h)^{1/3}}}\Big(\frac{(Mx/h)^{2(\frac{12}{5}+\epsilon)}}{x}\Big)^{1-\sigma}.
\end{multline*}
Our constraints on $h$, $M$, $\delta$, and $\theta$ imply that the above display is
\[
\ll_{\epsilon} (\log x)^{B}\max_{\frac{1}{2}\leq \sigma\leq 1-\frac{c}{2(\log x)^{2/3}(\log\log x)^{1/3}}}x^{-\frac{263}{60}\epsilon(1-\sigma)}\ll_{\epsilon}(\log x)^B\exp\Big(-\frac{263c\epsilon}{120}\frac{(\log x)^{1/3}}{(\log\log x)^{1/3}}\Big).
\]
If $\epsilon_1$ is chosen suitably in terms of $\epsilon$ and $x$ is suitably large with respect to $\epsilon$, then \eqref{eqn:sec4sum_6} is
\begin{equation}
\label{eqn:sec4sum7}
\ll_{\epsilon}\mu(I)h \exp\Big(-\frac{263c\epsilon}{240}\frac{(\log x)^{1/3}}{(\log\log x)^{1/3}}\Big).
\end{equation}
Theorem \ref{thm:main_theorem} now follows from \eqref{eqn:sec3sum_2}-\eqref{eqn:sec4sum7}.

\section{A further extension}

We now describe a natural extension of our work to the study of Fourier coefficients of holomorphic cuspidal newforms.  Let
\[
f(z) = \sum_{n\geq 1}\lambda_f(n) n^{\frac{k-1}{2}}e^{2\pi inz}
\]
be the Fourier series of a holomorphic cusp form of even integer weight $k\geq 2$ that transforms under a congruence subgroup $\Gamma_0(N_f)\subseteq \mathrm{SL}_2(\mathbb{Z})$, where $N_f\geq 1$ is the level of $f$.  Suppose further that $f$ is a newform, so that it is an eigenform of all of the Hecke operators.

The $L$-function $L(s,f)$ associated to $f$ is given by
\[
L(s,f) = \prod_{p}\frac{1}{1-\lambda_f(p)p^{-s}+\chi(p)p^{-2s}},\qquad \mathrm{Re}(s)>1,
\]
where $\chi\pmod{N_f}$ is the nebentypus character of $f$.  Suppose that $f$ has complex multiplication by an imaginary quadratic field $K$.  Equivalently, for $p\nmid N_f$, we have that $\lambda_f(p)=0$ if and only if $p$ is inert in $K$.  Unlike the case for elliptic curves, it could be the case that $K$ has a class number larger than 1, so the ideal class group $\mathrm{Cl}_K$ given by the abelian quotient $I_K/P_K$ is no longer a trivial group.  The work of Deuring can be extended to show that there exists a primitive Gr{\"o}ssencharacter $\xi\pmod{\km}$ defined over $K$ with $u=k-1$ and  $\mathrm{N}\km=N_f/D_K$ such that $L(s,f) = L(s,\xi)$. (See \cite[Section 12.3]{Iwaniec}.)  It follows that there exists $\theta_p\in[0,\pi]$ such that $\lambda_f(p) = 2\cos\theta_p$, and for $p\nmid N$, we have $\theta_p=\frac{\pi}{2}$ if and only if $p$ is inert in $K$.  We can now ask how the angles $\theta_p$ are distributed in the interval $[0,\pi]$.  In order to relate the values $\cos(m\theta_p)$ to the $L$-functions $L(s,\xi^m)$ like we did for elliptic curves, we restrict our consideration to the primes $p$ such that $(p)$ is principally generated.  In order to isolate such primes, we use the orthogonality of the characters $\psi$ of $\mathrm{Cl}_K$.  One can then relate the prime counting function
\[
\sum_{\substack{x<p\leq x+h\\ \theta_p\in I \\ \textup{$(p)$ principally generated}}}\log p
\]
to the analytic properties of the $L$-functions of the primitive characters that induce $\xi^m \psi$.

Ultimately, the distribution of primes we are interested in reduces to an understanding of the distribution of zeros of these twisted $L$-functions.  Lemmata \ref{ZFR} and \ref{ZDE} were proved in this level of generality in \cite{coleman1990,colemanZFR}, so our proofs will carry through with the aforementioned adjustments so that if $f$ is fixed, $\delta>0$ and $\theta>0$ are fixed and satisfy $\delta+\theta<\frac{5}{24}$, $x^{1-\delta}\leq h\leq x$, and $|I|\geq x^{-\theta}$, and $x$ is suitably large, then
\[
\frac{1}{h}\sum_{\substack{x<p\leq x+h \\ \theta_p\in I \\ \textup{$(p)$ principally generated}}}\log p = \frac{1}{|\mathrm{Cl}_K|}\Big(\frac{1}{2}\mathbf{1}_{\frac{\pi}{2}\in I}+\frac{|I|}{2\pi}\Big) \Big(1+O\Big(\exp\Big(-c'\frac{(\log x)^{1/3}}{(\log\log x)^{1/3}}\Big)\Big)\Big),
\]
where $c'>0$ is a certain constant.  Such a result contains Theorem \ref{thm:main_theorem} as a corollary via the modularity of elliptic curves, namely that for every elliptic curve $E/\mathbb{Q}$ of conductor $N_E$, there exists a weight 2 holomorphic cuspidal newform $f_E$ of level $N_E$ with trivial nebentypus character such that $L(s,E)=L(s,f_E)$.  When $E/\Q$ has CM by an imaginary quadratic field, $f_E$ is explicitly given in \cite[Theorem 12.5]{Iwaniec} via the Gr{\"o}ssencharacter $\xi$ such that $L(s,E)=L(s,\xi)$.

\bibliographystyle{acm} 
\bibliography{biblio} 

\end{document}